\definecolor{linkred}{RGB}{148,33,147} %DeepSkyBlue
\definecolor{linkblue}{RGB}{16, 78, 139}
	\titlespacing{\section}{0pt}{12pt}{0pt}
	\titlespacing{\subsection}{0pt}{6pt}{0pt}
\newcommand{\plainfootnote}[1]{%
  \begingroup
    \renewcommand{\thefootnote}{}%
    \footnotetext{#1}%
    \addtocounter{footnote}{-1}%
  \endgroup
}
\long\def\@footnotetext#1{% 
\H@@footnotetext{% 
\ifHy@nesting 
\hyper@@anchor{\@currentHref}{#1}% 
\else 
\Hy@raisedlink{\hyper@@anchor{\@currentHref}{\relax}}#1% 
\fi 
}}
\def\@footnotemark{% 
\leavevmode 
\ifhmode\edef\@x@sf{\the\spacefactor}\nobreak\fi 
\H@refstepcounter{Hfootnote}% 
\hyper@makecurrent{Hfootnote}% 
\hyper@linkstart{link}{\@currentHref}% 
\@makefnmark 
\hyper@linkend 
\ifhmode\spacefactor\@x@sf\fi 
\relax 
}% 
\theoremstyle{plain}
\newtheorem{theorem}{Theorem}[section]
\newtheorem{proposition}[theorem]{Proposition}
\newtheorem{lemma}[theorem]{Lemma}
\theoremstyle{definition}
\newcommand{\Hyp}{{\mathbb H}}
\newcommand{\Z}{{\mathbb Z}}
\newcommand{\M}{{\mathcal M}}
\newcommand{\PSL}{{\rm PSL}}
\newcommand{\arcsinh}{{\,\rm arcsinh}}
\newcommand{\arccosh}{{\,\rm arccosh}}
\long\def\symbolfootnote[#1]#2{\begingroup%
\def\thefootnote{\fnsymbol{footnote}}\footnote[#1]{#2}\endgroup}
\def\blfootnote{\xdef\@thefnmark{}\@footnotetext}
\begin{document}

{\Large \bfseries Near ideal decompositions of ideal polygons}

{\large
Hugo Parlier \par}

\plainfootnote{%
  \emph{2020 Mathematics Subject Classification:}
  Primary 32G15; Secondary 57K20, 30F60.\\
  \emph{Key words and phrases:}
  orthogeodesics, ideal polygons, hyperbolic surfaces, moduli spaces.}

\vspace{0.5cm}
{\bf Abstract.}
This article gives a short proof that all ideal polygons admit a short orthogeodesic decomposition. Specifically, all $n$-gons admit an orthogeodesic decomposition with orthogeodesics all of length at most $\sim 2 \log(n)$, and this is roughly optimal. 
\vspace{0.5cm}

\section{Introduction} \label{sec:intro}

In hindsight, the study of moduli spaces of hyperbolic surfaces began with Riemann's moduli problem, asking for a "good" parameter set for conformal classes of surfaces. One version is to find an exact classification of conformal classes, which, taking into account the uniformization theorem, is the question of finding a precise description of the moduli space of hyperbolic metrics on an orientable closed surface of genus $g\geq 2$. While this is difficult in general, the question of finding rough parametrizations, where rough can take different meanings, has been solved in different ways. 

One interpretation is the search for a fundamental domain for the action of the mapping class group on Teichmüller space, a problem going back to the 1970s, see for instance \cite{Keen}. By considering Fenchel-Nielsen coordinates and a theorem of Bers which states that any surface can be decomposed into pants decompositions of length at most a constant that only depends on the topology, you can find a rough fundamental domain, where each conformal class of surface appears at most a fixed number of times. This leads to questions about the growth of so-called Bers constants in terms of the genus, with lower bounds on the order of $\sqrt{g}$ and upper bounds on the order of $g$ (see \cite{Bers, BuserBook, BalacheffParlier, BalacheffParlierSabourau, ParlierShortBers, ParlierShorterBers}). 

Here a similar problem is considered, but this time for ideal hyperbolic $n$-gons, whose moduli space will be denoted by $\M_n$. An ideal $n$-gon can be cut along maximal collections of orthogeodesics, that is geodesics orthogonal to the sides of the polygon, pairwise disjoint and maximal with respect to inclusion. These will be called orthogeodesic decompositions, and are natural replacements for pants decompositions in this context. In fact, an ideal polygon can be doubled along its perimeter to obtain an $n$-punctured sphere with a natural reflexion, and the doubling of such an orthogeodesic decomposition results in a pants decomposition, by construction invariant under the reflexion.

The main observation of this note is that a similar result to Bers' holds, and whose asymptotic growth is straightforward to obtain.\\

\begin{theorem}\label{thm:main}
For any $n$, there exists a constant $O_n$ such that any $P \in \M_n$ admits an orthogeodesic decomposition $\mu_1, \hdots, \mu_{n-3}$ where each orthogeodesic is of length at most $O_n$. Furthermore
$$
 2 \arcsinh\left( \frac{3}{2} \cot\left(\frac{\pi}{n} \right) \right)  \leq O_n \leq 2 \arccosh\left( \frac{1}{\sin\left(\frac{\pi}{n} \right)}\right)
$$ 
and in particular
$$
\lim_{n\to \infty} \sfrac{O_n}{2 \log(n)} =1.
$$
\end{theorem}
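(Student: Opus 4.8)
The plan is to prove the two inequalities by independent arguments and then read off the limit. Since an ideal $n$-gon is a disk and each orthogeodesic is a properly embedded arc, every orthogeodesic decomposition has exactly $n-3$ members; so for the upper bound it suffices to exhibit, for each $P\in\M_n$, a family of $n-3$ pairwise disjoint orthogeodesics of length at most $2\arccosh(1/\sin(\pi/n))$, and for the lower bound it suffices to produce one polygon in which every orthogeodesic decomposition has a member of length at least $2\arcsinh(\tfrac32\cot(\pi/n))$.

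\textbf{Upper bound.} The main input is a sharp bound on the inradius: for every $P\in\M_n$ the radius $r(P)$ of the largest embedded disk satisfies $r(P)\le\arccosh(1/\sin(\pi/n))$. To see this, place a point realising the inradius at the centre of the disk model and let $\theta_1,\dots,\theta_n$ be the angular gaps between the consecutive ideal vertices of $P$ seen from this centre; the distance from the centre to the side spanning the gap $\theta_i$ equals $\log\cot(\theta_i/4)$. Since $\sum_i\theta_i=2\pi$, some gap is at least $2\pi/n$, so the corresponding side lies within $\log\cot(\pi/(2n))=\arccosh(1/\sin(\pi/n))$ of the centre; as this distance is at least $r(P)$, the bound follows. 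I then extract the decomposition from the cut locus $\Sigma\subset P$ of the sides, i.e. the set of points with two or more nearest sides. As $P$ is a disk with $n$ cusps, $\Sigma$ is a tree with one end running into each cusp, and after passing to a trivalent refinement it has exactly $n-3$ compact edges. A compact edge separates the Voronoi regions of two \emph{non-adjacent} sides $s_i,s_j$ (the bisector of two asymptotic sides is the ray into their common cusp, an end of $\Sigma$), so $s_i,s_j$ have a common perpendicular $\mu_e$, and for any point $w$ on that edge $\length(\mu_e)=d(s_i,s_j)\le d(w,s_i)+d(w,s_j)=2\,d(w,\partial P)\le 2r(P)$. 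Finally, the compact edges of a tree induce a laminar family of partitions of its set of ends, i.e. of the cyclically ordered sides; the arcs $\mu_e$ realise these partitions and are therefore pairwise disjoint, giving the desired decomposition with all lengths at most $2r(P)\le 2\arccosh(1/\sin(\pi/n))$.

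\textbf{Lower bound.} I take $P$ to be the regular ideal $n$-gon $R_n$ and show that no orthogeodesic decomposition of it has all members short. Cutting $R_n$ along $\mu_1,\dots,\mu_{n-3}$ gives $n-2$ pieces, each a generalised polygon whose vertices are either ideal (the $n$ cusps of $R_n$, each lying in one piece) or right-angled (the $2(n-3)$ feet of the $\mu_i$, each lying on two pieces); Gauss--Bonnet then expresses the area of a piece through these two counts, and the areas sum to $(n-2)\pi$. The feature of $R_n$ that I would exploit is that its corners are \emph{wide}: a perpendicular dropped on a side of $R_n$ near one of its endpoints exits $R_n$ before meeting any other side, so the foot of any orthogeodesic must sit well away from the endpoints of its side. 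Making this quantitative with the explicit metric of $R_n$ — a side is at distance $\arccosh(1/\sin(\pi/n))$ from the centre, and the hyperbolic sine of that distance is $\cot(\pi/n)$ — yields a lower bound for the area a piece must carry around each of its cusps; feeding these bounds into the area identity and optimising over the distribution of the $2(n-3)$ feet among the pieces forces the longest $\mu_i$ to be at least $2\arcsinh(\tfrac32\cot(\pi/n))$.

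\textbf{Asymptotics, and where the work is.} As $n\to\infty$ one has $1/\sin(\pi/n)=\tfrac n\pi+O(1/n)$ and $\cot(\pi/n)=\tfrac n\pi+O(1/n)$, while $\arccosh x=\log(2x)+O(x^{-2})$ and $\arcsinh x=\log(2x)+O(x^{-2})$; hence both bounds on $O_n$ equal $2\log n+O(1)$ and $O_n/(2\log n)\to 1$. The inradius estimate and these expansions are routine. The main obstacle is the combinatorial geometry: for the upper bound, checking that the common perpendiculars read off the cut locus are genuinely disjoint and handling non-generic cut loci; and, above all, for the lower bound, turning ``wide corners cost area'' into the precise constant, i.e. pinning down the worst-case allocation of the right-angled vertices among the pieces of $R_n$.
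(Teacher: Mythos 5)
Your upper bound is essentially the paper's argument. The inradius estimate via the angular gaps at the incentre (some gap is at least $2\pi/n$, and $\log\cot(\theta/4)=\arccosh(1/\sin(\theta/2))$) is the same computation as the paper's Basmajian-type identity, and extracting $n-3$ orthogeodesics of length at most $2r_n$ from the compact edges of the cut-locus tree is exactly the paper's construction. The one place where you defer work that the paper actually does is the non-generic case: the paper resolves vertices of valence $\geq 4$ by adding explicit arcs through the vertex (the $\alpha_k$), whereas your ``trivalent refinement'' still owes the check that its degenerate edges pair off non-adjacent sides; the length bound itself survives, since the vertex is within $r_n$ of all the sides involved. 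Up to that level of detail, which matches the paper's own, this half is fine.

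The lower bound, however, is a genuine gap. Your Gauss--Bonnet/area strategy is never carried out, and as described it cannot produce the constant: every complementary piece of every orthogeodesic decomposition of every ideal $n$-gon is a generalised right-angled hexagon of area exactly $\pi$, so the identity ``the areas sum to $(n-2)\pi$'' holds identically, independently of the lengths of the $\mu_i$, and there is no slack against which to charge ``wide corners''. You acknowledge yourself that extracting the constant is the main obstacle, i.e.\ the bound is not proved. The missing ideas are the two the paper uses: (i) on the regular polygon, an orthogeodesic splitting off $n_1$ vertices ($2\leq n_1\leq n/2$) has length exactly $\ell_{n_1}=2\arcsinh\left(\sin\left(\frac{n_1\pi}{n}\right)\sinh (r_n)\right)$, by a single quadrilateral formula applied to the two radii of length $r_n$ meeting at angle $\frac{2\pi n_1}{n}$; and (ii) any orthogeodesic decomposition is dual to a trivalent tree with $n$ ends, which always has an edge separating at least $n/3$ ends on each side, so (using that $\ell_{n_1}$ increases in $n_1$ up to $n/2$) some orthogeodesic has length at least $2\arcsinh\left(\sin\left(\frac{\pi}{3}\right)\sinh (r_n)\right)$, the stated lower bound. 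Without this, your asymptotic conclusion only gives $\limsup_{n\to\infty} O_n/(2\log n)\leq 1$, not the limit claimed in the theorem.
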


One of the main ingredients in the proof is the use of the upper bound on the size of the largest disk on an ideal polygon (Proposition \ref{prop:inradius}), which is an ideal polygon version of a result of Bavard \cite{Bavard} for closed surfaces.

The moduli space $\M_n$ has been studied before, namely in \cite{ParkerEtAl} where billard paths are studied. The authors observe that this moduli space can be thought of as the subset of the moduli space of $n$-punctured spheres with an orientation reversing involution that fixes the punctures. Likewise, you can double the ideal polygon to get a punctured sphere. Now within the larger Teichmüller space of the punctured sphere, this forms a geodesically convex subspace for the Weil-Petersson metric, and the authors use the convexity of length functions \cite{Wolpert} to prove that the average billard path length with fixed number of bounces is uniquely minimized for the polygon $P_n$ above.

{\bf Organization.} The article is organized as follows. After a preliminary and setup section,  Section \ref{sec:short} is dedicated to proving the upper bound of Theorem \ref{thm:main} while in Section \ref{sec:long}, the lower bound is provided. 

{\bf Acknowledgements.} This paper comes from using the space of ideal polygons as a toy moduli space in various doctoral courses, including in Strasbourg in 2017, Thailand in 2019 and Bern in 2025.  The author is very grateful to the organizers and the participants for these wonderful opportunities to share ideas and draw pictures for eager audiences. Many thanks to Marie Abadie for commenting on an earlier version of this draft, and to the referee for comments which helped improve exposition in certain crucial places. The author is supported by ANR-SNF Grant number 200021E\_238147 (SUGAR). 
\section{Preliminaries and setup}

An ideal $n$-gon in the hyperbolic plane $\Hyp$ is a polygon with its $n$ vertices on the boundary of $\Hyp$. The space of all ideal $n$-gons up to isometry is a moduli space, denoted here by $\M_n$. For $n=3$, the moduli space consists of one point, as there is only one ideal triangle up to isometry, so from now on $n\geq 4$. As $2n-3$ lengths and angles determine a hyperbolic polygon, and $n$ angles are equal to $0$, $\M_n$ is of real dimension $n-3$. 

Given two non-adjacent sides of $P\in \M_n$, there is a unique geodesic between them which realizes their distance, and orthogonal in both endpoints. These will be called orthogeodesics. Note that any path between the two corresponding sides is homotopic, with endpoints allowed to move along the sides, to this unique orthogeodesic. This useful fact can be thought of as the "ideal polygon" version of the unicity of closed geodesics in the free homotopy class of a closed curve on a hyperbolic surface. By a counting argument, any ideal $n$-gon has exactly $\frac{1}{2} n (n-2)$ orthogeodesics. Note that on surfaces, also with boundary but with more topology, there are infinitely many orthogeodesics which have been studied in different contexts. For instance, Basmajian and Fanoni studied properties of the shortest orthogeodesic \cite{Basmajian-Fanoni}. 

A set of disjoint orthogeodesics which is maximal with respect to inclusion is called an orthogeodesic decomposition. An orthogeodesic decomposition is always a collection of $n-3$ orthogeodesics. There are many ways to see this. One way is to associate it to a triangulation, by orienting the boundary of the polygon and pulling each endpoint of the orthogeodesic to a vertex, following the orientation. A triangulation of an $n$-gon has $n-3$ edges, so the orthogeodesic decomposition had $n-3$ elements as well. Another way is to observe that the complementary region of the decomposition consists in a collection of pieces that are either right-angled hexagons, right-angled pentagons with one ideal point, or a quadrilateral with two adjacent ideal points and two right angles (see Figure \ref{fig:pieces}). These pieces will be called {\it elementary}. 

\begin{figure}[h]
%\ShowGrid
\leavevmode \SetLabels
%\L(.51*.78) $p$\\%
%\L(.5*.42) $q$\\%
%\L(.505*.54) $\delta$\\%
%\L(.49*.00) $\alpha$\\%
\endSetLabels
\begin{center}
\AffixLabels{\centerline{\includegraphics[width=12cm]{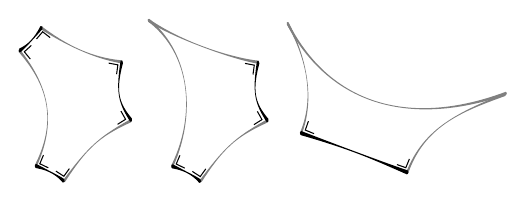}}}
\vspace{-24pt}
\end{center}
\caption{The different types of complementary regions to an orthogeodesic decomposition} \label{fig:pieces}
\end{figure}

Observe that one can think of the elementary pieces as generalized right-angled hexagons, where one or two of the sides are allowed to be of length $0$. In particular, the lengths of the orthogeodesic sides determine their geometry and they are all area $\pi$. Unlike for pants decompositions, there are no twist parameters, as the endpoints of the orthogeodesics have to match up. In particular this means:
\begin{proposition}
The $n-3$ (marked) lengths of the orthogeodesics in an orthogeodesic decomposition determine $P\in \M_n$ uniquely. 
\end{proposition}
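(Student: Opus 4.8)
The plan is to show that $P$ can be canonically reconstructed from the marked orthogeodesic lengths by reassembling the complementary pieces of the decomposition, the essential point being that — unlike for pants decompositions — gluing along an orthogeodesic involves no twist parameter. Fix an orthogeodesic decomposition $\mu_1,\dots,\mu_{n-3}$ of $P$. Cutting $P$ along it produces the generalized right-angled hexagons described above; each such piece is determined up to isometry by the lengths of its orthogeodesic-arc sides (for a genuine right-angled hexagon this is the classical fact that three alternating sides determine the hexagon, and the degenerate types — the right-angled pentagon with one ideal vertex, and the quadrilateral with two adjacent ideal vertices and two right angles — are limits of the right-angled hexagon relation and are rigid in exactly the same way). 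The combinatorial pattern recording which pairs of arc-sides among the pieces are to be reglued (each pair being the two copies of some $\mu_i$) is part of the data of the decomposition, hence is common to any two polygons carrying it.

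The key claim is that the regluing maps are themselves determined by the length data. Each $\mu_i$ runs between two distinct sides $s\neq s'$ of $P$, meeting each orthogonally in a single point. After cutting, the two copies of $\mu_i$ are isometric segments of length $\length(\mu_i)$, so there are a priori two identifications between them; but the arcs of $s$ (resp.\ of $s'$) that the two pieces adjacent to $\mu_i$ inherit must be reglued across the foot of $\mu_i$ on $s$ (resp.\ on $s'$), which forces the identification to carry the foot on $s$ to the foot on $s$. Since $s\neq s'$, this pins the identification down, and orthogonality of $\mu_i$ to $s$ and $s'$ guarantees that the reglued arcs fit together smoothly into geodesics. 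In particular there is no free continuous parameter: a segment, unlike a circle, admits no one-parameter family of self-isometries that could play the role of a twist.

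It then follows that $P$ — being the result of gluing the pieces by these forced identifications — is completely determined by $\length(\mu_1),\dots,\length(\mu_{n-3})$ together with the fixed combinatorial pattern; hence two polygons carrying the same marked orthogeodesic decomposition with the same lengths are isometric. If one prefers to view the reconstruction concretely inside $\Hyp$ rather than as an abstract gluing, one notes that the pieces assemble along a tree — each $\mu_i$, being an arc with endpoints on $\partial P$, separates, so the $n-3$ disjoint cuts yield $n-2$ simply connected pieces and create no cycles — so developing into $\Hyp$ meets no monodromy obstruction and produces an honest ideal $n$-gon. The only steps deserving more than a line are the rigidity of the two degenerate piece types, which comes from the limiting right-angled hexagon relation, and the no-twist statement, which is the forced-identification argument just given; everything else is bookkeeping.
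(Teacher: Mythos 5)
Your proposal is correct and follows essentially the same route as the paper, which derives the proposition from the observation that the complementary pieces are generalized right-angled hexagons determined by their orthogeodesic side lengths and that, unlike for pants decompositions, there are no twist parameters since the endpoints of the orthogeodesics must match up. Your write-up simply makes explicit the details (rigidity of the degenerate pieces, the forced regluing at the feet, and the tree structure ruling out monodromy) that the paper leaves as a brief remark.
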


Note that for hyperbolic trigonometry formulas, which will be used throughout this note, one can refer to \cite{BuserBook}, page 454.

To end this preliminary section, the case where $n=4$ is considered. Note that in this case an orthogeodesic decomposition consists in a single orthogeodesic. 

\begin{proposition}
Any $P \in \M_4$ admits an orthogeodesic of length at most $2\arcsinh(1)$, and the bound is sharp. In particular $\M_4$ is in one-to-one correspondence with the interval $]0,2\arcsinh(1)]$ where $t \in ]0,2\arcsinh(1)]$ represents the length of the shortest orthogeodesic of the ideal $4$-gon.
\end{proposition}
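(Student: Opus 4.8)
The plan is to parametrize $\M_4$ by a single real parameter and then optimize the length of the unique orthogeodesic over this one-parameter family. An ideal $4$-gon has four ideal vertices on $\partial\Hyp$; normalizing three of them via an isometry of $\Hyp$ (say to $0$, $1$, $\infty$ in the upper half-plane model, or equivalently to three fixed points on $\partial\D$), the isometry class is determined by the position of the fourth vertex, which lives in a one-real-parameter family — consistent with $\dim\M_4=1$ stated above. Concretely, I would instead use the complementary-region description: a single orthogeodesic $\mu$ of length $t$ cuts $P\in\M_4$ into two quadrilaterals each with two adjacent ideal points and two right angles (the rightmost piece in Figure~\ref{fig:pieces}), glued along $\mu$. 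Each such quadrilateral is a generalized right-angled hexagon with three sides of length $0$, and its geometry is determined by $t$; the ideal $4$-gon $P$ has exactly two orthogeodesics, say of lengths $t_1$ and $t_2$, and I would first establish a trigonometric relation between $t_1$ and $t_2$ coming from the fact that both decompositions must reconstruct the same $P$.

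The key computation is to pin down this relation. Using the hyperbolic trigonometry of a quadrilateral with two adjacent right angles and two ideal vertices (a "Lambert-quadrilateral degeneration") — the formulas on p.~454 of \cite{BuserBook} — I expect a clean identity of the form $\sinh(t_1/2)\sinh(t_2/2)=1$, or equivalently $\cosh(t_1/2)\cosh(t_2/2)=\coth(\text{something})$; the honest version is the one I would derive carefully. Granting an identity of the shape $\sinh(t_1/2)\sinh(t_2/2)=1$, the shorter of the two orthogeodesics, $t=\min(t_1,t_2)$, satisfies $\sinh(t/2)\le 1$, i.e. $t\le 2\arcsinh(1)$, with equality exactly when $t_1=t_2$, which is the regular ideal $4$-gon $P_4$. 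This gives the sharp upper bound. For the parametrization statement, the map $P\mapsto t=\min(t_1,t_2)\in\,]0,2\arcsinh(1)]$ is well-defined; injectivity follows because $t$ together with the constraint $\sinh(t/2)\sinh(t'/2)=1$ determines both orthogeodesic lengths $\{t,t'\}$, and by the uniqueness Proposition above the pair of (marked) orthogeodesic lengths determines $P$, so any ambiguity is at most the choice of which orthogeodesic is "first" — but once we declare $t$ to be the \emph{shortest}, this is resolved. Surjectivity onto the half-open interval follows by a continuity/degeneration argument: as $t\to 0^+$ one orthogeodesic shrinks while the polygon degenerates, and $t=2\arcsinh(1)$ is attained at $P_4$; since $\M_4$ is connected and one-dimensional and $t$ is a continuous proper function to $]0,\infty)$ bounded above by $2\arcsinh(1)$ and attaining it, its image is exactly $]0,2\arcsinh(1)]$.

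The main obstacle I anticipate is getting the trigonometric identity between $t_1$ and $t_2$ exactly right, including the correct constant — degenerate right-angled hexagons with \emph{two} ideal vertices require taking limits of the standard right-angled hexagon relations, and it is easy to mishandle which sides go to $0$ versus which go to $\infty$. Once that identity is secured, the optimization $\sinh(t/2)\le1$ and the identification of the extremal configuration with the regular $4$-gon $P_4$ are immediate, and the bijection with $]0,2\arcsinh(1)]$ is a short continuity argument. A secondary point to be careful about is the marking: "one-to-one correspondence with the interval" should be read with $t$ as the shortest orthogeodesic length, since an unmarked ideal $4$-gon generically has two distinct orthogeodesics and we must not double-count.
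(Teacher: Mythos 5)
Your write-up identifies the right quantity to control and the right shape of answer, but the central step is left as a placeholder: you ``grant an identity of the shape $\sinh(t_1/2)\sinh(t_2/2)=1$'' and explicitly defer its derivation. That identity \emph{is} the proposition -- everything after it (the optimization $\sinh(t/2)\le 1$, sharpness at the symmetric quadrilateral, the bijection with $]0,2\arcsinh(1)]$ via the uniqueness-of-$P$ proposition and a continuity argument) is routine once it is in hand. Moreover, the route you sketch for obtaining it does not obviously lead there: cutting along a \emph{single} orthogeodesic of length $t_1$ produces two pieces whose geometry is determined by $t_1$ alone, so the hexagon-degeneration formulas for those pieces give no relation between $t_1$ and $t_2$ by themselves. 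To extract the relation along your route you would still have to locate the second orthogeodesic inside the reglued polygon and compute its length as a function of $t_1$, which is exactly the computation you have not done. As it stands, this is a genuine gap, not a stylistic omission.

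The paper closes it with a symmetry observation that bypasses the degenerate-hexagon bookkeeping entirely: the two orthogeodesics of an ideal $4$-gon intersect orthogonally and bisect each other, so cutting along \emph{both} splits $P$ into four isometric Lambert quadrilaterals (three right angles and one ideal vertex) whose finite sides have lengths $x/2$ and $y/2$. The standard trigonometric identity for such quadrilaterals then reads $\sinh(x/2)\sinh(y/2)=1$ directly, with no limiting argument needed, and the bound plus sharpness follow at once. If you want to keep your single-cut framework, you must either prove the bisection/orthogonality symmetry (e.g.\ via the reflections of the ideal quadrilateral through its two orthogeodesics, or the order-two rotation exchanging opposite vertices) and then reduce to the Lambert quadrilateral as the paper does, or else carry out honestly the computation of $t_2$ in terms of $t_1$ from your glued picture; either way, the identity must be derived, not assumed. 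Your caveat about reading $t$ as the length of the \emph{shortest} orthogeodesic is correct and consistent with the statement.
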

\begin{proof}
An ideal $4$-gon only has 2 orthogeodesics, say of lengths $x$ and $y$ with $x\leq y$, and it is not difficult to see that they intersect orthogonally and bisect each other. They split the ideal quadrilateral into 4 isometric so-called Lambert quadrilaterals, with 3 right angles and an ideal point. The non-infinite lengths of these smaller quadrilaterals are thus $\frac{x}{2}$ and $\frac{y}{2}$. By a standard formula for quadrilaterals, 
$$
\sinh\left(\frac{x}{2}\right)\sinh\left(\frac{y}{2}\right)=1
$$
from which we can deduce that $x \leq 2 \arcsinh(1)$ and $y \geq 2 \arcsinh(1)$. This proves the claim. 
\end{proof}

Note that there is a unique quadrilateral with $2$ equal orthogeodesics (of lengths $2 \arcsinh(1)$) and which has a rotational symmetry. This rotational symmetry means that this quadrilateral will correspond to an orbifold point in the $1$-dimensional moduli space underlying moduli space. In fact, it is the unique orbifold point in the moduli space, and thus plays a similar role to the square and the hexagonal torus in the moduli space of flat tori which are the orbifold points of the modular surface $\Hyp / \PSL_2(\Z)$. 

\section{Embedded disks and short orthogeodesic decompositions}\label{sec:short}

One of the more remarkable members  of $ \M_n $ is the unique regular ideal $n$-gon $P_n$, which can be constructed, in the Poincaré disk model, by taking as vertices $n$ evenly spaced points on the boundary circle of the hyperbolic plane. It has a maximally embedded disk, tangent to each of its $n$ sides. We begin by computing the radius of this disk, which we think of as the maximal inner radius of $P_n$. 

\begin{lemma}
The largest embedded disk in $P_n$ has radius $r_n$ where 
$$
r_n= \arccosh\left( \frac{1}{\sin\left(\frac{\pi}{n} \right)}\right)
$$
\end{lemma}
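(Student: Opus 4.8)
The plan is to prove that the largest embedded disk in $P_n$ is the one centred at the centre $O$ of the Poincaré disk model, and then to compute its radius by a single application of hyperbolic trigonometry. In an ideal polygon every side is a complete geodesic, so writing $\ell_1,\dots,\ell_n$ for the geodesics carrying the sides we have $\partial P_n=\ell_1\cup\dots\cup\ell_n$, and the radius of the largest embedded disk is simply $\sup_{p\in P_n}d(p,\partial P_n)$. Since $O$ is fixed by the dihedral symmetry group of $P_n$, which acts transitively on the sides, the number $r_n:=d(O,\ell_i)$ does not depend on $i$ and $d(O,\partial P_n)=r_n$. Coning the sides to $O$ decomposes $\overline{P_n}$ into $n$ triangles $\Delta_1,\dots,\Delta_n$, where $\Delta_i$ is the convex hull of $O$ together with $\ell_i$. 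For $p\in\Delta_i$ we have $d(p,\partial P_n)\le d(p,\ell_i)$, and since $x\mapsto d(x,\ell_i)$ is convex on $\Hyp$, vanishes on $\ell_i$, and equals $r_n$ at $O$, it is $\le r_n$ throughout $\Delta_i$. Hence $d(p,\partial P_n)\le r_n$ for all $p\in P_n$, with equality at $O$, so the largest embedded disk is $B(O,r_n)$.

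It then remains to compute $r_n=d(O,\ell_1)=\length(Om)$, where $m$ is the foot of the perpendicular from $O$ to $\ell_1$ (equivalently the midpoint of the side $s_1$). Consider the triangle with vertices $O$, $m$, and one of the two ideal endpoints $v$ of $\ell_1$: it has a right angle at $m$, a zero angle at $v$, and — by the symmetry of $P_n$, which splits the angle $2\pi/n$ of the cone $\Delta_1$ at $O$ in half along $Om$ — an angle $\pi/n$ at $O$. Passing to the upper half-plane model with $v$ at $\infty$, the geodesics $\ell_1$ and $Ov$ become vertical, and $Om$ becomes an arc of a Euclidean semicircle $t\mapsto(a+\rho\cos t,\rho\sin t)$, orthogonal to $\ell_1$ at parameter $t=\pi/2$ (the point $m$) and meeting the vertical geodesic through $O$ at angle $\pi/n$, i.e.\ at parameter $t=\pi/n$. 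Along such a semicircle the hyperbolic line element is $ds=dt/\sin t$, so
$$
\length(Om)=\int_{\pi/n}^{\pi/2}\frac{dt}{\sin t}=\ln\cot\!\left(\frac{\pi}{2n}\right),
$$
and therefore
$$
\cosh\big(\length(Om)\big)=\frac12\left(\cot\frac{\pi}{2n}+\tan\frac{\pi}{2n}\right)=\frac{1}{\sin(\pi/n)}.
$$
(Equivalently, this is the standard relation $\cosh b=1/\sin\alpha$ for the leg $b$ joining the right angle to an acute angle $\alpha$ in a right-angled triangle whose third vertex is ideal; see \cite{BuserBook}.) This yields $r_n=\arccosh\big(1/\sin(\pi/n)\big)$, as required.

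The trigonometric computation is routine and could simply be quoted, so the only part I expect to need genuine care is the optimality of $O$. The argument above reduces this to convexity of the distance to a geodesic combined with the coning decomposition of $\overline{P_n}$; the points to check carefully are that each $\Delta_i$ really is the convex hull of $O$ and $\ell_i$ (so that the one-variable convexity bound applies along every geodesic from $O$ to a point of $\ell_i$) and that the ideal vertices cause no trouble in this step.
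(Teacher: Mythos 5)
Your proof is correct, and its computational core coincides with the paper's: both reduce, via the dihedral symmetry, to a right-angled triangle with one ideal vertex, angle $\pi/n$ at the centre and finite side $r_n$, giving $\cosh(r_n)\sin(\pi/n)=1$. The differences are two. First, instead of quoting the standard trigonometric relation for such a triangle (as the paper does, citing \cite{BuserBook}), you rederive it by integrating $dt/\sin t$ along the semicircle in the upper half-plane; this is fine and self-contained, though it buys nothing beyond avoiding a citation. Second, and more substantively, you prove that the inscribed disk centred at $O$ is actually the \emph{largest} embedded disk, via convexity of $d(\cdot,\ell_i)$ on the cone decomposition of $\overline{P_n}$ into the triangles $\Delta_i$. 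The paper's proof of the lemma does not argue this at all -- it treats the disk tangent to all $n$ sides as evidently maximal, and in any case the optimality of $O$ follows from Proposition \ref{prop:inradius}, proved immediately afterwards, which bounds the inradius at \emph{every} point of \emph{every} $P\in\M_n$ by $r_n$ via the Basmajian-type angle identity. So your convexity argument is a genuine, more elementary alternative to that proposition for the special case of $P_n$; its only delicate points are the ones you flag yourself (points of $\Delta_i$ on the two rays from $O$ to the ideal vertices are not on segments from $O$ to $\ell_i$, but are handled by continuity, or by convexity of $d(\cdot,\ell_i)$ along a geodesic asymptotic to $\ell_i$), and these are easily closed.
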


\begin{proof}
Using the symmetry of the polygon, we can reduce the computation to a single right-angled triangle with one ideal point, and the unique non-infinite side of length $r_n$. Following a standard hyperbolic trigonometry formula (see Figure \ref{fig:triangle}), we have 
$$
\cos(0) = \cosh(r_n) \sin\left(\frac{\pi}{n} \right)
$$
from which the formula is easily deduced.
\end{proof}

\begin{figure}[h]
%\ShowGrid
\leavevmode \SetLabels
%\L(.51*.78) $p$\\%
%\L(.5*.42) $q$\\%
\L(.7*.54) $r_n$\\%
\L(.73*.47) $\frac{\pi}{n}$\\%
\endSetLabels
\begin{center}
\AffixLabels{\centerline{\includegraphics[width=12cm]{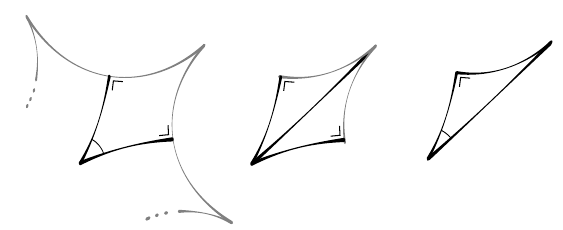}}}
\vspace{-24pt}
\end{center}
\caption{Computing the inradius of $P_n$} \label{fig:triangle}
\end{figure}

For completeness, we provide a quick proof of the following result which is in fact a very simple version of an identity of Basmajian \cite{Basmajian}. 

\begin{lemma}
For any point $p\in P$, $P\in \M_n$, let $x_1,\hdots, x_n$ be the distances to the $n$ sides of $P$. Then 
$$
\sum_{k=1}^n \arcsin \left( \frac{1}{\cosh\left(x_k\right)}\right) = \pi
$$
\end{lemma}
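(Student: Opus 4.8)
The plan is to read the identity off the way the $n$ sides of $P$, thought of as complete geodesics of $\Hyp$, carve up the total angle $2\pi$ of directions at the point $p$. Fix $p$ in the interior of $P$, and for a direction $\theta$ at $p$ let $\gamma_\theta$ be the geodesic ray from $p$ in that direction and $\xi(\theta)\in\partial\Hyp$ its endpoint at infinity; the map $\theta\mapsto\xi(\theta)$ is the usual homeomorphism between the circle of directions at $p$ and $\partial\Hyp$. Label the ideal vertices $v_1,\dots,v_n$ cyclically, let $s_k$ be the side joining $v_k$ to $v_{k+1}$, and let $H_k$ be the half-plane bounded by $s_k$ containing $P$, so that $P=\bigcap_k H_k$. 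The vertices cut $\partial\Hyp$ into $n$ open arcs $B_1,\dots,B_n$, where $B_k$ is the short arc bounded by $v_k$ and $v_{k+1}$; then $\partial\Hyp\cap\overline{H_k}$ is the complementary (long) arc, so $\bigcap_k\overline{H_k}\cap\partial\Hyp=\{v_1,\dots,v_n\}$.

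First I would use convexity of the half-planes — a geodesic meets the bounding geodesic of a half-plane at most once — to show that $\gamma_\theta$ exits $P$ through exactly one side $s_k$ whenever $\xi(\theta)$ is not one of the $n$ vertices, and that this $k$ is precisely the index with $\xi(\theta)\in B_k$. Consequently the sets $S_k=\{\theta:\xi(\theta)\in B_k\}$ are $n$ pairwise disjoint open angular sectors at $p$ whose union is all of the circle of directions minus the $n$ (angularly negligible) directions pointing at the vertices. Hence their angular widths sum to $2\pi$.

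Next I would compute the width of $S_k$. Since $s_k$ is a complete geodesic of $\Hyp$ lying at distance $x_k$ from $p$, the foot of the perpendicular from $p$ to $s_k$ singles out a direction that bisects $S_k$: reflection in that perpendicular is an isometry of $\Hyp$ fixing $p$, preserving $s_k$, and interchanging $v_k$ with $v_{k+1}$, hence preserving $B_k$. The two extreme directions of $S_k$ are those of the rays from $p$ asymptotic to $v_k$ and to $v_{k+1}$, i.e. the limiting parallels from $p$ to the line $s_k$; each makes with the perpendicular the angle of parallelism $\Pi(x_k)$ associated to the distance $x_k$. Thus $S_k$ has width $2\Pi(x_k)$, and the classical formula for the angle of parallelism (see \cite{BuserBook}, p.~454) gives
\[
\sin\Pi(x_k)=\frac{1}{\cosh(x_k)},\qquad\text{so}\qquad \Pi(x_k)=\arcsin\!\left(\frac{1}{\cosh(x_k)}\right).
\]
Summing over $k$ then gives $\sum_{k=1}^n 2\Pi(x_k)=2\pi$, which is the asserted identity.

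The one point that genuinely needs care is the first step: verifying that the sectors $S_k$ tile the full angle at $p$ with no overlaps and no gaps, equivalently that from an interior point a generic geodesic ray leaves the convex region $P$ through exactly one of its sides. Everything else is either the soft boundary-at-infinity dictionary or a single trigonometric substitution.
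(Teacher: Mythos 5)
Your proof is correct and is essentially the paper's argument in different clothing: the sectors $S_k$ are exactly the angles at $p$ of the triangles $p\,v_k\,v_{k+1}$ with two ideal vertices that the paper uses, and the angle-of-parallelism relation $\sin\Pi(x_k)=1/\cosh(x_k)$ is the same trigonometric formula the paper invokes for those triangles, so both proofs amount to summing the angles $2\arcsin\left(1/\cosh(x_k)\right)$ to $2\pi$. Your boundary-at-infinity justification of the angle decomposition is a more explicit version of the paper's one-line "the angles add up to $2\pi$", but it is not a different route.
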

\begin{proof}
The proof is very similar to the previous computation. To each boundary component, associate the triangle it forms together with $p$. The path of length $x_k$ is the height of this triangle with two ideal points, and denote by $\theta_k$ the angle at $p$. By the same formula as before, we can compute $\theta_k$ is function of $x_k$:
$$
\theta_k = 2 \arcsin \left( \frac{1}{\cosh\left(x_k\right)}\right).
$$
Now the angles add up to $2\pi$, so we have 
$$
\sum_{k=1}^n \theta_k = \sum_{k=1}^n  2\arcsin \left( \frac{1}{\cosh\left(x_k\right)}\right) = 2\pi
$$
and the identity follows by dividing by $2$.
\end{proof}

Our final result before proving the theorem is the following corollary of the above result. 

\begin{proposition}\label{prop:inradius}
For $P\in \M_n$ and $p\in P$, the inradius of $p$ is bounded above by 
$$
r_n= \arccosh \left( \frac{1}{\sin\left(\frac{\pi}{n} \right)}\right).
$$
\end{proposition}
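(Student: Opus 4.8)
The plan is to combine the Basmajian-type identity from the preceding lemma with the strict monotonicity of the function $t \mapsto \arcsin\!\left(1/\cosh t\right)$. First I would observe that, since an ideal polygon $P$ is a convex subset of $\Hyp$, the inradius of a point $p \in P$ — the radius of the largest disk embedded in $P$ and centered at $p$ — equals $\min_{1\le k\le n} x_k$, the distance from $p$ to its nearest side. So it suffices to exhibit one index $k$ with $x_k \le r_n$.

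Next I would argue by contradiction: suppose $x_k > r_n$ for every $k$. The function $f(t) = \arcsin\!\left(1/\cosh t\right)$ is strictly decreasing on $[0,\infty)$, since $\cosh$ is increasing and $\arcsin$ is increasing on $[0,1]$; hence $f(x_k) < f(r_n)$ for each $k$. A direct computation from the definition of $r_n$ gives $\cosh(r_n) = 1/\sin\!\left(\pi/n\right)$, so that $f(r_n) = \arcsin\!\left(\sin\!\left(\pi/n\right)\right) = \pi/n$, using that $n\ge 4$ forces $\pi/n \in (0,\pi/2]$ so the principal branch applies. Summing over $k$ then yields $\sum_{k=1}^n f(x_k) < n\cdot \pi/n = \pi$, which contradicts the identity $\sum_{k=1}^n f(x_k) = \pi$ established in the previous lemma. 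Therefore some $x_k \le r_n$, and the inradius of $p$ is at most $r_n$.

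I do not anticipate a genuine obstacle here: the proposition is essentially an averaging (pigeonhole) consequence of the identity, with equality precisely at the regular polygon $P_n$, where all $x_k$ equal $r_n$ by the first lemma of this section. The only points requiring a word of care are that the identity of the preceding lemma is applied to the arbitrary point $p\in P$ (which is exactly its statement) and that the arcsine is evaluated on the correct branch; both are immediate.
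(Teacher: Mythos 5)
Your proof is correct and is essentially the paper's argument: both are the same pigeonhole applied to the preceding lemma, since the identity $\sum_k \arcsin\left(1/\cosh(x_k)\right)=\pi$ is just the angle sum $\sum_k \theta_k = 2\pi$ in disguise, and the paper's ``$\theta_{\max}\geq 2\pi/n$ forces $x_{\min}\leq r_n$'' is the contrapositive of your contradiction. No further comment needed.
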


\begin{proof} 
Choose any $p \in P$, and consider $x_1,\hdots, x_n$ and $\theta_1, \hdots,\theta_n$ as in the previous lemma. Note that they each satisfy the following equality, obtained by expressing the distance in terms of the angle:
$$
x_k =  \arccosh\left( \frac{1}{\sin\left(\frac{\theta_k}{2} \right)}\right).
$$

Observe that the function $\arcsin \left( \frac{1}{\cos\left(x\right)}\right)$ is decreasing in $x$, hence the maximum angle 
$$\theta_{\max} := \max \{\theta_k \mid k= 1,\hdots,n \}
$$
corresponds to the minimal distance
$$
x_{\min}  := \min \{x_k \mid k= 1,\hdots,n \}
$$
which is also the maximal inradius at $p$. 

Now as $\theta_{\max} \geq \frac{2 \pi}{n}$, we have 
$$
x_{\min} = \arccosh\left( \frac{1}{\sin\left(\frac{\theta_{\min}}{2} \right)}\right) \leq \arccosh\left( \frac{1}{\sin\left(\frac{\pi}{n} \right)}\right)
$$
as claimed. 
\end{proof}
We note that this result is analogous to the (more difficult) result for closed surfaces, namely Bavard's \cite{Bavard} sharp upper bound on the radius of a disk on a closed orientable hyperbolic surface. 

The proof shows that equality only occurs for $P_n$ with $p$ being the central point of the polygon. 

We can now prove the upper bound on the length of orthogeodesics. 

\begin{theorem}\label{thm:upper}
Any $P \in \M_n$ admits an orthogeodesics decomposition $\mu_1, \hdots, \mu_{n-3}$ with 
$$
\ell(\mu_k) \leq 2 \, r_n
$$
for all $k\in \{1,\hdots, n-3\}$. 
\end{theorem}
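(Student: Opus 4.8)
The plan is to build the decomposition greedily, slicing off one short orthogeodesic at a time, with the length control coming entirely from the inradius bound of Proposition \ref{prop:inradius}.

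\textbf{A single short orthogeodesic.} First I would show that every $P\in\M_n$ with $n\ge 4$ carries an orthogeodesic of length at most $2r_n$. Let $D$ be a largest embedded disk in $P$, with centre $O$ and radius $\rho$; by Proposition \ref{prop:inradius}, $\rho\le r_n$. Since $P$ is convex with geodesic sides, $D$ must be tangent to at least two sides — if it met only one, a small push of $O$ away from that side would enlarge it. Two of the tangent sides, say $A$ and $B$, are non-adjacent: if $D$ meets exactly two sides, non-enlargeability forces the directions of steepest increase of $d(\cdot,A)$ and $d(\cdot,B)$ at $O$ to be opposite, so $A$ and $B$ have a common perpendicular through $O$ and are therefore not asymptotic; and if $D$ meets three or more sides then, the side-adjacency graph of an $n$-gon being an $n$-cycle with no triangle for $n\ge 4$, two of them are non-adjacent. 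The orthogeodesic $\mu$ between $A$ and $B$ then has length $d(A,B)\le d(A,O)+d(O,B)=2\rho\le 2r_n$.

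\textbf{Iterating.} Cut $P$ along $\mu=\mu_1$. Since $A$ and $B$ are non-adjacent each of the two pieces still contains a complete side of $P$, but the pieces are no longer ideal polygons: each acquires a single ``cut side'' that meets its neighbours at right angles. I would therefore prove, about such \emph{truncated} ideal polygons $Q$ arising by slicing some $P\in\M_n$ along disjoint orthogeodesics of length at most $2r_n$, the statement: \emph{if $Q$ contains an orthogeodesic of $P$ in its interior, then it contains one of length at most $2r_n$}. Granting this, one iterates — slicing any piece that still contains an interior orthogeodesic — until no piece does, at which stage the chosen orthogeodesics form a maximal disjoint family, hence an orthogeodesic decomposition with its $n-3$ members (as recalled in the preliminaries), all of length at most $2r_n$. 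This is the theorem, the base cases $n=3$ (empty) and $n=4$ (where $2\arcsinh 1=2r_4$) being already in hand. The easy half of the boxed statement mirrors Step 1: $Q\subseteq P$ forces $d(p,\partial Q)\le d(p,\partial P)\le r_n$ for every $p\in Q$ (a shortest segment from $p$ to $\partial P$ that leaves $Q$ must first cross a cut side), so a largest disk $D_Q$ in $Q$ has radius $\rho_Q\le r_n$ and is tangent to at least two sides of $Q$; if two of those are arcs of original sides of $P$ whose carrying geodesics are ultraparallel, their common perpendicular is an orthogeodesic of $P$, lies in $Q$ by convexity, and has length at most $2\rho_Q\le 2r_n$.

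\textbf{The main obstacle.} The crux is the remaining case, where $D_Q$ is not tangent to such a pair — it is wedged against cut sides together with at most one arc of an original side, or against two original-side arcs whose carrying geodesics share an ideal point. One has to rule out that this coexists with $Q$ still containing an interior orthogeodesic: the heuristic is that a largest disk trapped by the scars of earlier cuts lives in a thin collar of a cut side, and a region that is little more than such a collar is already one of the three terminal pieces of Figure \ref{fig:pieces} and contains no further orthogeodesic. Making this precise — enumerating the wedged configurations and also disposing of degenerate incidences, such as the common perpendicular found above coinciding with an earlier cut or running along a side of $Q$ — is where the real work would be; beyond that everything is Proposition \ref{prop:inradius} together with elementary hyperbolic trigonometry. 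One could instead cap each piece off across $\mu_1$ into a genuine ideal polygon with fewer vertices and induct on $n$, but reconciling the two decompositions afterwards costs comparable effort.
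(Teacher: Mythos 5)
Your first step is fine: a largest embedded disk in an ideal polygon is tangent to two non-adjacent sides, and Proposition \ref{prop:inradius} bounds its radius, so every $P\in\M_n$ has an orthogeodesic of length at most $2r_n$. But the theorem's entire content lies in the iteration step, which you isolate as a claim about truncated pieces (\emph{if $Q$ still contains an interior orthogeodesic of $P$, it contains one of length at most $2r_n$}) and then leave unproven. Your largest-disk argument only produces a short orthogeodesic when the extremal disk in $Q$ happens to touch two non-adjacent original sides whose carriers are ultraparallel; once cut sides are present this can genuinely fail without $Q$ being terminal. For instance, the largest disk may be wedged against a cut side $\mu_1$ together with the two original sides meeting $\mu_1$ orthogonally at its endpoints: those sides are non-adjacent in $P$, but their common perpendicular \emph{is} $\mu_1$, so the construction returns the old cut and nothing new. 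Proposition \ref{prop:inradius} only controls $d(p,\partial P)$; it says nothing about which sides of $Q$ the largest disk touches, and your ``thin collar'' heuristic is not justified: a cut side has length up to $2r_n\sim 2\log n$, so a collar of width $\sim r_n$ around it need not be small relative to the piece. Classifying the wedged configurations and the degenerate incidences you mention is exactly the missing work, and it is not a routine verification.

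For comparison, the paper sidesteps the iteration entirely. It takes the nearest-side (Voronoi) decomposition of $P$ itself: the cut locus is a geodesic tree, each of whose edges consists of points joined by two rays of length at most $r_n$ to two non-adjacent sides, so each edge determines an orthogeodesic of length at most $2r_n$, and these are pairwise disjoint; at vertices of degree $m\ge 4$ one adds the orthogeodesics homotopic to concatenations of pairs of the $m$ vertex-to-side paths, again of length at most $2r_n$, which completes the decomposition in a single pass. Because everything is read off from $P$ directly, no statement about truncated polygons is ever needed. If you want to rescue your greedy scheme, you would in effect have to reprove this cut-locus analysis piecemeal for each truncated piece, which is more work than the global argument it replaces.
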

\begin{proof}

By Proposition~\ref{prop:inradius}, each point $p\in P$ is at distance at most $r_n$ from the boundary of $P$. We decompose $P$ into cells as follows. To each point $p\in P$, we associate the side of $P$ to which it is closest.

This decomposes $P$ into cells whose boundary points correspond to points at equal distance from at least two sides. The boundary of each cell is piecewise geodesic, and the points at which the geodesic is broken correspond to points at equal distance from at least three sides. The union of these boundary points will be referred to as the cut locus. Observe that the cut locus is a geodesic tree, since it cannot contain any cycles. Its vertices have degree at least $3$ (and for a generic polygon $P\in \mathcal M_n$, all vertices have degree exactly $3$, although this fact will not be used).

Now let $x$ be a point lying in the interior of an edge of the cut locus, i.e., not a vertex and not a leaf. By definition, $x$ has exactly two closest sides of $P$, say $s_1$ and $s_2$, and these sides are necessarily non-adjacent. There are exactly two minimizing geodesic segments from $x$ to $\partial P$, one meeting $s_1$ and one meeting $s_2$. Both have equal length, at most $r_n$, and meet $\partial P$ orthogonally. Concatenating these two segments yields a geodesic arc with endpoints on $s_1$ and $s_2$, freely homotopic (rel.\ boundary) to a unique orthogeodesic in $P$. For any two points lying on the same open edge of the cut locus, this construction yields the same homotopy class, and hence the same orthogeodesic. We denote by $\Gamma$ the collection of orthogeodesics obtained in this way, one for each open edge of the cut locus corresponding to a pair of non-adjacent sides. Cut-locus edges corresponding to pairs of adjacent sides occur only near the ideal vertices of $P$; these edges are discarded from the construction, since the associated minimizing segments run into the same ideal vertex and do not determine an orthogeodesic contained in $P$.

If every vertex of the cut locus has degree exactly $3$, then the orthogeodesics in $\Gamma$ already form an orthogeodesic decomposition of $P$, because each complementary region is elementary.  When the cut locus has vertices of degree greater than $3$, the complementary regions bounded by $\Gamma$ are no longer elementary and must be further subdivided. We now describe a local construction which, for each vertex of degree $m\ge 4$, introduces finitely many additional orthogeodesics so as to complete the decomposition near that vertex.

To do this, label the $m\geq 4$ geodesic paths $a_1, \hdots, a_m$ from the vertex of the locus to the boundary by choosing $a_1$ arbitrarily and then by choosing an orientation around the vertex. Orient $a_1$ from the boundary to the vertex and orient the others from the vertex to the boundary. For $k=2, \hdots, m$, choose the $m-1$ homotopy classes $\alpha_k$ of arcs obtained by concatenating $a_1$ and $a_k$ (see Figure \ref{fig:cut}). 

\begin{figure}[h]
%\ShowGrid
\leavevmode \SetLabels
%\L(.51*.78) $p$\\%
%\L(.5*.42) $q$\\%
\L(.255*.24) $a_1$\\%
\L(.343*.393) $a_2$\\%
\L(.415*.535) $a_3$\\%
\L(.63*.27) $\alpha_1$\\%
\L(.645*.4) $\alpha_2$\\%
\L(.74*.43) $\beta_1$\\%
\endSetLabels
\begin{center}
\AffixLabels{\centerline{\includegraphics[width=10cm]{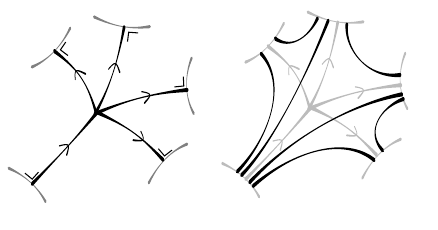}}}
\vspace{-24pt}
\end{center}
\caption{Constructing a decomposition} \label{fig:cut}
\end{figure}

Note that $\alpha_k$ has a unique geodesic minimizer in its homotopy class (with endpoints gliding on the side of the polygon), which is either an orthogeodesic or possibly, for $k=2$ and/or $k=m$, an ideal point. In any case its length is always bounded above by $2 r_n$. To these we add the orthogeodesics $\beta_k$ homotopic to the concatenation of $a_k^{-1}$ and $a_{k+1}$. 

As all of the complementary pieces are elementary, the full collection of orthogeodesics consists in an orthogeodesic decomposition as required.
\end{proof}

\section{A lower bound on $O_n$}\label{sec:long}
In the previous section, an upper bound on $O_n$ was obtained, and so to conclude the proof of Theorem \ref{thm:main}, we need to compute the lower bound. (The statement about asymptotic growth is a straightforward consequence of the two bounds.)

A lower bound comes from looking at $P_n$. We begin by computing the lengths of all of the orthogeodesics of $P_n$. 

Note that any orthogeodesic splits the vertices of $P_n$ into two sets, those lying on either side of the orthogeodesic. If the cardinalities of the two sets are $n_1$ and $n_2$, observe that $n_1, n_2 \geq 2$ and $n_1+n_2=n$. The length of the orthogeodesic only depends on this splitting. If we suppose that $n_1\leq n_2$, we can compute the length of the orthogeodesic observing that is homotopic to the concatenation of two geodesic arcs of length $r_n$ which meet at an angle of $\frac{n_1}{n} 2 \pi$. It can thus be computed using the formula for a quadrilateral with three right angles and one angle equal to $\frac{n_1}{n}  \pi$ (see Figure \ref{fig:shape}). The length of the orthogeodesic is denoted $\ell_{n_1}$ as it only depends on $n_1$. 

\begin{figure}[h]
%\ShowGrid
\leavevmode \SetLabels
%\L(.51*.78) $p$\\%
%\L(.5*.42) $q$\\%
\L(.63*.6) $r_n$\\%
\L(.53*.73) $\frac{n_1}{n}  \pi$\\%
\endSetLabels
\begin{center}
\AffixLabels{\centerline{\includegraphics[width=8cm]{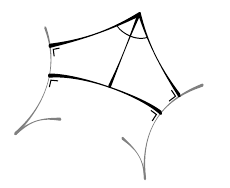}}}
\vspace{-24pt}
\end{center}
\caption{Computing orthogeodesic lengths on $P_n$} \label{fig:shape}
\end{figure}

Using hyperbolic trigonometry again, we have 

$$
\sinh \left( \frac{\ell_{n_1}}{2}\right) = \sin\left( \frac{n_1}{n}\pi \right) \sinh(r_n)
$$
and so 
$$
\ell_{n_1} = 2 \arcsinh\left( \sin\left( \frac{n_1}{n}\pi \right) \sinh(r_n) \right) .
$$

A crucial observation is that any orthogeodesic decomposition of an ideal $n$-gon contains at least one orthogeodesic which separates the polygon into two sub-polygons, each containing at least $\frac{n}{3}$ ideal vertices. Indeed, an orthogeodesic decomposition determines a planar tree whose leaves correspond to the ideal vertices of $P$ and whose internal vertices have degree at most $3$. Any such tree with $n$ leaves contains an edge whose removal separates the tree into two components, each containing at least $\frac{n}{3}$ leaves. This follows from a standard balancing argument: starting from any edge and repeatedly moving across it toward the side containing more than $\frac{2n}{3}$ leaves must eventually terminate at an edge for which neither side contains more than $\frac{2n}{3}$ leaves.

In particular, this means there is at least one orthogeodesic of length 
$$
2 \arcsinh\left( \sin\left( \frac{\pi}{3}  \right) \sinh(r_n) \right) = 2 \arcsinh\left( \frac{3}{2} \cot\left(\frac{\pi}{n} \right) \right).
$$
This proves the lower bound on $O_n$ in Theorem \ref{thm:main}.

{\it Address and email:}\\
Department of Mathematics, University of Fribourg, Switzerland\\
hugo.parlier@unifr.ch

\end{document}